
\documentclass[12pt]{article}



\usepackage{amsmath}
\usepackage{amsfonts}
\usepackage{latexsym}
\usepackage{amssymb}
\usepackage{curves}
\usepackage{graphicx}
\usepackage{amsthm,amscd, amssymb}

\newtheorem{thm}{Theorem}[section]
\newtheorem{pro}[thm]{Proposition}
\newtheorem{cor}[thm]{Corollary}
\newtheorem{lem}[thm]{Lemma}

\newcommand{\noin}{\noindent}
\newcommand{\comp}{\, {}_\circ \,}

\newcommand{\ppp}{{\vphantom{\gamma}}}

\newcommand{\SL}{\mbox{\rm{SL}}}

\newcommand{\Con}{\mbox{\rm{Con}}}
\newcommand{\Ind}{\mbox{\rm{Ind}}}
\newcommand{\Inf}{\mbox{\rm{Inf}}}
\newcommand{\Iso}{\mbox{\rm{Iso}}}
\newcommand{\Res}{\mbox{\rm{Res}}}

\newcommand{\can}{\mbox{\rm{can}}}
\newcommand{\con}{\mbox{\rm{con}}}
\newcommand{\id}{\mbox{\rm{id}}}
\newcommand{\ind}{\mbox{\rm{ind}}}
\renewcommand{\inf}{\mbox{\rm{inf}}}
\newcommand{\iso}{\mbox{\rm{iso}}}
\newcommand{\lin}{\mbox{\rm{lin}}}
\newcommand{\mob}{\mbox{\rm{m\"{o}b}}}
\newcommand{\res}{\mbox{\rm{res}}}

\newcommand{\oo}{\overline}

\newcommand{\frakK}{{\mathfrak K}}

\newcommand{\frakp}{{\mathfrak p}}

\newcommand{\FF}{{\mathbb{F}}}
\newcommand{\KK}{{\mathbb{K}}}
\newcommand{\ZZ}{{\mathbb{Z}}}

\newcommand{\cI}{{\mathcal I}}
\newcommand{\cJ}{{\mathcal J}}
\newcommand{\cK}{{\mathcal K}}
\newcommand{\cL}{{\mathcal L}}
\newcommand{\cP}{{\mathcal P}}
\newcommand{\cQ}{{\mathcal Q}}
\newcommand{\cR}{{\mathcal R}}
\newcommand{\cT}{{\mathcal T}}

\addtolength{\textheight}{0.18\textheight}
\addtolength{\textwidth}{0.15\textwidth}

\addtolength{\oddsidemargin}{-0.6in}
\addtolength{\evensidemargin}{-0.6in}

\begin{document}

\title{A new canonical induction \\
formula for $p$-permutation modules}

\author{\large Laurence Barker \hspace{1in}
Hatice Mutlu
\\ \mbox{} \\
\normalsize Department of Mathematics \\
\normalsize Bilkent University \\
\normalsize 06800 Bilkent, Ankara \\
\normalsize Turkey}

\maketitle

\small

\begin{abstract}
\noin Applying Robert Boltje's theory of canonical induction,
we give a restriction-preserving formula expressing
any $p$-permutation module as a $\ZZ[1/p]$-linear
combination of modules induced and inflated from
projective modules associated with subquotient groups.
The underlying constructions include, for any given
finite group, a ring with a $\ZZ$-basis indexed by
conjugacy classes of triples $(U, K, E)$ where $U$
is a subgroup, $K$ is a $p'$-residue-free normal
subgroup of $U$ and $E$ is an indecomposable
projective module of the group algebra of $U/K$.

\smallskip
\noin 2010 {\it Mathematics Subject Classification:} 20C20.
\end{abstract}

\section{Introduction}
We shall be applying Boltje's theory of canonical
induction \cite{Bol98a} to the ring of $p$-permutation
modules. Of course, $p$ is a prime. We shall be
considering $p$-permutation modules for finite
groups over an algebraically closed field $\FF$ of
characteristic $p$. A review of the theory of
$p$-permutation modules can be found in
Bouc--Th\'{e}venaz \cite[Section 2]{BT10}.

A canonical induction formula for $p$-permutation
modules was given by Boltje \cite[Section 4]{Bol98b}
and shown to be $\ZZ$-integral. It expresses any
$p$-permutation module, up to isomorphism, as a
$\ZZ$-linear combination of modules induced from
a special kind of $p$-permutation module, namely,
the $1$-dimensional modules.

We shall be inducing from another special kind of
$p$-permutation module. Let $G$ be a finite
group. We understand all $\FF G$-modules to
be finite-dimensional. An indecomposable
$\FF G$-module $M$ is said to be {\bf exprojective}
provided the following equivalent conditions hold
up to isomorphism: there exists a normal subgroup
$K \unlhd G$ such that $M$ is inflated from a
projective $\FF G / K$-module; there exists
$K \unlhd G$ such that $M$ is a direct summand
of the permutation $\FF G$-module $\FF G / K$;
every vertex of $M$ acts trivially on $M$; some
vertex of $M$ acts trivially on $M$. Generally, an
$\FF G$-module $X$ is called {\bf exprojective}
provided every indecomposable direct summand
of $X$ is exprojective.

The exprojective modules do already play a
special role in the theory of $p$-permutation
modules. Indeed,  the parametrization of the
indecomposable $p$-permutation modules,
recalled in Section 2, characterizes any
indecomposable $p$-permutation module
as a particular direct summand of a module
induced from an exprojective module.

We shall give a $\ZZ[1/p]$-integral canonical
induction formula, expressing any
$p$-permutation $\FF G$-module, up to
isomorphism, as a $\ZZ[1/p]$-linear combination
of modules induced from exprojective modules.
More precisely, we shall be working with the
Grothendieck ring for $p$-permutation modules
$T(G)$ and we shall be introducing another
commutative ring $\cT(G)$ which, roughly
speaking, has a free $\ZZ$-basis consisting of
lifts of induced modules of indecomposable
exprojective modules. Letting $\KK$ be a field
of characteristic zero that is sufficiently large
for our purposes, we shall consider a ring
epimorphism $\lin_G : \cT(G) \rightarrow
T(G)$ and its $\KK$-linear extension $\lin_G :
\KK \cT(G) \rightarrow \KK T(G)$. The latter
is split by a $\KK$-linear map $\can_G :
\KK T(G) \rightarrow \KK \cT(G)$ which, as
we shall show, restricts to a $\ZZ[1/p]$-linear
map $\can_G : \ZZ[1/p] T(G) \rightarrow
\ZZ[1/p] \cT(G)$.

To motivate further study of the algebras
$\ZZ[1/p] \cT(G)$ and $\KK \cT(G)$, we
mention that, notwithstanding the formulas
for the primitive idempotents of $\KK T(G)$
in Boltje \cite[3.6]{Bol}, Bouc--Th\'{e}venaz
\cite[4.12]{BT10} and \cite{Bar}, the relationship
between those idempotents and the basis
$\{ [M_{P, E}^G] : (P, E) \in_G \cP(E) \}$ remains
mysterious. In Section 4, we shall prove that
$\KK \cT(G)$ is $\KK$-semisimple as well as
commutative, in other words, the primitive
idempotents of $\KK \cT(G)$ comprise a
basis for $\KK \cT(G)$. We shall also describe
how, via $\lin_G$, each primitive idempotent
of $\KK T(G)$ lifts to a primitive idempotent
of $\KK \cT(G)$.

\section{Exprojective modules}

We shall establish some general properties of
exprojective modules.

Given $H \leq G$, we write ${}_G \Ind {}_H$ and
${}_H \Res {}_G$ to denote the induction and
restriction functors between $\FF G$-modules
and $\FF H$-modules. When $H \unlhd G$, we write
${}_G \Inf {}_{G/H}$ to denote the inflation functor to
$\FF G$-modules from $\FF G/H$-modules. Given
a finite group $L$ and an understood isomorphism
$L \rightarrow G$, we write ${}_L \Iso {}_G$ to denote
the isogation functor to $\FF L$-modules from
$\FF G$-modules, we mean to say,
${}_L \Iso {}_G (X)$ is the $\FF L$-module obtained
from an $\FF G$-module $X$ by transport of
structure via the understood isomorphism.

Let us classify the exprojective $\FF G$-modules
up to isomorphism. We say that $G$ is
{\bf $p'$-residue-free} provided $G = O^{p'}(G)$,
equivalently, $G$ is generated by the Sylow
$p$-subgroups of $G$. Let $\cQ(G)$ denote the
set of pairs $(K, F)$, where $K$ is a
$p'$-residue-free normal subgroup of $G$ and
$F$ is an indecomposable projective
$\FF G / K$-module, two such pairs $(K, F)$
and $(K', F')$ being deemed the same provided
$K = K'$ and $F \cong F'$. We define an
indecomposable exprojective $\FF G$-module
$M_G^{K, F} = {}_G \Inf {}_{G/K} (F)$. By
considering vertices, we obtain the following
result.

\begin{pro} \label{2.1}
The condition $M \cong M_G^{K, F}$
characterizes a bijective correspondence
between:

\noin {\bf (a)} the isomorphism classes of
indecomposable exprojective
$\FF G$-modules $M$,

\noin {\bf (b)} the elements $(K, F)$ of $\cQ(G)$.
\end{pro}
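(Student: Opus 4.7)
The plan is to verify that the assignment $(K,F) \mapsto [\Inf_{G/K}(F)]$ is a well-defined bijection between $\cQ(G)$ and the isomorphism classes of indecomposable exprojective $\FF G$-modules. In the forward direction this is immediate: inflation from $G/K$ is a fully faithful functor whose essential image consists of the $\FF G$-modules having $K$ in their kernel, so it preserves indecomposability, and $M_G^{K,F}$ is exprojective by the very definition of that notion.

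For surjectivity, I would start with an indecomposable exprojective module $M$ and fix a vertex $Q$. By the exprojective hypothesis $Q$ acts trivially on $M$, and since every $G$-conjugate ${}^g Q$ is again a vertex of $M$, it too acts trivially. Let $K$ be the normal closure $\langle {}^g Q : g \in G \rangle$. Then $K \unlhd G$ is generated by $p$-subgroups, so $K = O^{p'}(K)$, and $K$ acts trivially on $M$; hence $M \cong \Inf_{G/K}(F)$ for a unique indecomposable $\FF G/K$-module $F$. The main obstacle is to check that $F$ is projective. For this I would invoke the standard vertex-under-inflation formula: when $K \unlhd G$ and $F$ is indecomposable with vertex $R/K$ (where $K \le R \le G$), a vertex of $\Inf_{G/K}(F)$ is a Sylow $p$-subgroup of $R$. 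Applied here, $Q$ is (up to $G$-conjugacy) a Sylow $p$-subgroup of $R$, yet $Q \le K \le R$; the $p$-part of $|R|$ therefore already coincides with the $p$-part of $|K|$, forcing $R/K$ to be both a $p$-group and a $p'$-group, hence trivial. Thus the vertex of $F$ is trivial, i.e., $F$ is projective.

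Injectivity follows from the canonicity of this reconstruction. Applied to any $(K',F') \in \cQ(G)$ with $M \cong \Inf_{G/K'}(F')$, the same vertex formula shows that a Sylow $p$-subgroup of $K'$ is $G$-conjugate to $Q$. Since $K'$ is normal in $G$ and $p'$-residue-free, the $G$-conjugates of its Sylow $p$-subgroups all lie in $K'$ and together generate $K'$; therefore $K'$ is the normal closure of $Q$ in $G$, which equals $K$. Then $F \cong F'$ follows from uniqueness of the $\FF G/K$-module whose inflation is $M$.
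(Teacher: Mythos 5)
Your proof is correct and follows exactly the route the paper indicates — the paper's entire justification is the one-line remark preceding the proposition, "By considering vertices, we obtain the following result," and your argument is precisely that remark fleshed out: the vertex-under-inflation formula pins down both that $F$ must be projective (surjectivity) and that $K$ is recovered as the normal closure of the vertex, using the $p'$-residue-free hypothesis to get uniqueness (injectivity).
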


In particular, for a $p$-subgroup $P$ of $G$, the
condition $E \cong {}_{N_G(P)} \Inf {}_{N_G(P)/P}(\oo{E})$
characterizes a bijective correspondence between,
up to isomorphism, the indecomposable exprojective
$\FF N_G(P)$-modules $E$ with vertex $P$ and the
indecomposable projective $\FF N_G(P)/P$-modules
$\oo{E}$. It follows that the
well-known classification of the isomorphism classes
of indecomposable $p$-permutation $\FF G$-modules,
as in Bouc--Th\'{e}venaz \cite[2.9]{BT10} for instance,
can be expressed as in the next result. Let $\cP(G)$
denote the set of pairs $(P, E)$ where $P$ is a
$p$-subgroup of $G$ and $E$ is an exprojective
$\FF N_G(P)$-module with vertex $P$, two such
pairs $(P, E)$ and $(P', E')$ being deemed the same
provided $P = P'$ and $E \cong E'$. We make
$\cP(G)$ become a $G$-set via the actions on the
coordinates. We define $M_{P, E}^G$ to be the
indecomposable $p$-permutation $\FF G$-module
with vertex $P$ in Green correspondence with $E$.

\begin{thm} \label{2.2}
The condition $M \cong M_{P, E}^G$
characterizes a bijective correspondence
between:

\noin {\bf (a)} the isomorphism classes of
indecomposable $p$-permutation
$\FF G$-modules $M$,

\noin {\bf (b)} the $G$-conjugacy classes of
elements $(P, E) \in \cP(G)$.
\end{thm}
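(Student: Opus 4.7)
The plan is to derive Theorem \ref{2.2} as a direct reformulation of the well-known Green-correspondence parametrization of indecomposable $p$-permutation modules, using Proposition \ref{2.1} at $N_G(P)$ to translate the datum ``indecomposable projective $\FF N_G(P)/P$-module'' into the datum ``indecomposable exprojective $\FF N_G(P)$-module with vertex $P$''.

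First I would invoke the classical classification, as recorded in Bouc--Th\'{e}venaz \cite[2.9]{BT10}: up to isomorphism, the indecomposable $p$-permutation $\FF G$-modules $M$ are in bijective correspondence with the $G$-conjugacy classes of pairs $(P, \oo{E})$, where $P$ is a $p$-subgroup of $G$ and $\oo{E}$ is an indecomposable projective $\FF N_G(P)/P$-module; the bijection sends $(P, \oo{E})$ to the unique indecomposable $p$-permutation $\FF G$-module $M$ with vertex $P$ whose Green correspondent in $N_G(P)$ is ${}_{N_G(P)}\Inf{}_{N_G(P)/P}(\oo{E})$.

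Next I would apply Proposition \ref{2.1} to $N_G(P)$ with normal subgroup $K = P$. Since any $p$-group is generated by its Sylow $p$-subgroups (itself), $P$ is $p'$-residue-free, so $(P, \oo{E}) \in \cQ(N_G(P))$, and the proposition yields the bijection $\oo{E} \mapsto E := {}_{N_G(P)}\Inf{}_{N_G(P)/P}(\oo{E})$ between the indecomposable projective $\FF N_G(P)/P$-modules and the indecomposable exprojective $\FF N_G(P)$-modules with vertex $P$. Composing this with the classification of the previous paragraph and setting $M_{P, E}^G := M$ gives the stated correspondence on isomorphism classes.

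Finally I would verify $G$-equivariance of the composite bijection $(P, \oo{E}) \leftrightarrow (P, E)$: conjugation by $g \in G$ sends $N_G(P)$ to $N_G({}^g P)$, and isogation via conjugation commutes with inflation, so the map $\oo{E} \mapsto E$ is natural under $G$-conjugation. Passing to $G$-conjugacy classes of pairs $(P, E) \in \cP(G)$ then gives the parametrization as stated.

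The only real point requiring care is to confirm that the vertex behaves correctly under the identification in Proposition \ref{2.1}, namely that $E = {}_{N_G(P)}\Inf{}_{N_G(P)/P}(\oo{E})$ has vertex exactly $P$ when $\oo{E}$ is projective. This is the content of the paragraph following Proposition \ref{2.1}, so no extra work is needed and the argument reduces to bookkeeping with the two bijections.
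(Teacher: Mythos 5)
Your proposal matches the paper's (implicit) argument exactly: the paper presents Theorem \ref{2.2} as an immediate translation of the classical parametrization in Bouc--Th\'{e}venaz \cite[2.9]{BT10} via the special case of Proposition \ref{2.1} at $N_G(P)$ with $K = P$, which is precisely the composite of bijections you describe. Your additional remarks on $G$-equivariance of the translation and on the vertex of the inflated projective module are correct routine verifications that the paper leaves unspoken.
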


We now give a necessary and sufficient condition
for $M_{P, E}^G$ to be exprojective.

\begin{pro} \label{2.3}
Let $(P, E) \in \cP(G)$. Let $K$ be the normal
closure of $P$ in $G$. Then $M_{P, E}^G$ is
exprojective if and only if $N_K(P)$ acts
trivially on $E$. In that case, $K$ is
$p'$-residue-free, $P$ is a Sylow $p$-subgroup
of $K$, we have $G = N_G(P)K$, the inclusion
$N_G(P) \hookrightarrow G$ induces an
isomorphism $N_G(P)/P \cong G/K$, and
$M_{P, E}^G \cong M_G^{K, F}$,
where $F$ is the indecomposable projective
$\FF G/K$-module determined, up to
isomorphism, by the condition $E \cong
{}_{N_G(P)} \Inf {}_{N_G(P)/P} \Iso {}_{G/K} (F)$.
\end{pro}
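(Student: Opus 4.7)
The plan is to combine the parametrization of Proposition~\ref{2.1} with a Mackey-style formula for the composite ${}_{N_G(P)}\Res{}_G\,{}_G\Inf{}_{G/K}$, applied after a Frattini argument for the normal closure $K$ of $P$. For the forward direction, suppose $M_{P,E}^G$ is exprojective. Proposition~\ref{2.1} furnishes a unique $(K',F')\in\cQ(G)$ with $M_{P,E}^G\cong M_G^{K',F'}={}_G\Inf{}_{G/K'}(F')$. The first step is to compute the vertex of $M_G^{K',F'}$: since $F'$ is projective it is a summand of the regular $\FF G/K'$-module, so $M_G^{K',F'}$ is a summand of the permutation module ${}_G\Ind{}_{K'}(\FF)=\FF[G/K']$, whose indecomposable summands have vertex a Sylow $p$-subgroup of $K'$. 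Matching with $M_{P,E}^G$ gives that $P$ is Sylow in $K'$. Since $K'=O^{p'}(K')$ is generated by its Sylow $p$-subgroups, which are all $G$-conjugate to $P$, we obtain $K'\le K$, and the reverse inclusion is immediate from $P\le K'$ and $K'\unlhd G$. Hence $K=K'$ is $p'$-residue-free, $P$ is Sylow in $K$, the Frattini argument delivers $G=N_G(P)K$, and the second isomorphism theorem gives $N_G(P)/N_K(P)\cong G/K$.

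To identify $E$ in terms of $F$, I would invoke the Mackey-style identity
\begin{equation*}
{}_{N_G(P)}\Res{}_G\,{}_G\Inf{}_{G/K}(F)\;\cong\;{}_{N_G(P)}\Inf{}_{N_G(P)/N_K(P)}\,{}_{N_G(P)/N_K(P)}\Iso{}_{G/K}(F),
\end{equation*}
which holds because $G=N_G(P)K$ and $N_G(P)\cap K=N_K(P)$. The right-hand side is indecomposable with vertex $P$ (the Sylow $p$-subgroup of $N_K(P)$), so it is precisely the Green correspondent $E$; in particular $N_K(P)$ acts trivially on $E$, and $F$ is recovered by stripping the outer inflation and transporting through the Frattini isomorphism. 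For the converse, assume $N_K(P)$ acts trivially on $E$ and write $E={}_{N_G(P)}\Inf{}_{N_G(P)/P}(\oo E)$ with $\oo E$ indecomposable projective over $\FF N_G(P)/P$. Then $N_K(P)/P$ acts trivially on the projective $\oo E$, whence $\oo E|_{N_K(P)/P}$ is simultaneously projective and trivial, forcing $N_K(P)/P$ to be a $p'$-group. A standard Sylow argument inside $K$ (a Sylow $p$-subgroup $Q$ of $K$ containing $P$ would yield $N_Q(P)\le N_K(P)$ a $p$-subgroup properly containing $P$ unless $Q=P$) then gives $P$ Sylow in $K$, and the Frattini setup follows as above. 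Further inflating $\oo E$ along $N_G(P)/P\twoheadrightarrow N_G(P)/N_K(P)$ produces a projective $\FF N_G(P)/N_K(P)$-module which, via the Frattini isomorphism, corresponds to an indecomposable projective $F$ over $\FF G/K$; the displayed Mackey identity then identifies the Green correspondent of $M_G^{K,F}$ as $E$, so $M_{P,E}^G\cong M_G^{K,F}$ is exprojective.

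The hardest step is establishing the Mackey-type identification of ${}_{N_G(P)}\Res{}_G\,{}_G\Inf{}_{G/K}(F)$ and verifying that the resulting $\FF N_G(P)$-module is indecomposable with vertex exactly $P$; the remainder of the argument is formal, resting on Sylow theory, the Frattini argument, and the standard fact that over $\FF L$ a projective module on which a normal $p'$-subgroup $N$ acts trivially is the inflation of a projective $\FF L/N$-module.
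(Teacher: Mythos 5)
Your proof is correct and takes a genuinely different route in the converse direction. Where the paper works inside $X = {}_G\Ind{}_{N_G(P)}(E)$, explicitly constructing complementary $\FF G$-submodules $Y$ and $Y'$ as sums over a transversal of $N_K(P)$ in $K$, checking $X=Y\oplus Y'$ by a dimension count, and then showing ${}_{N_G(P)}\Res{}_G(Y)\cong E$ and that $K$ acts trivially on $Y$, you instead go the other way: you compute ${}_{N_G(P)}\Res{}_G\,{}_G\Inf{}_{G/K}(F)$ via the Mackey--Clifford identity for the factorization $G=N_G(P)K$ with $N_G(P)\cap K=N_K(P)$, observe that the result is indecomposable with vertex $P$, and invoke Green correspondence to conclude $M_G^{K,F}\cong M_{P,E}^G$. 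Both routes lean on the same structural facts (the vertex of $E$ is Sylow in $N_K(P)$, hence $P$ is Sylow in $K$; Frattini gives $G=N_G(P)K$; Green correspondence), but your use of $\Res\circ\Inf$ is cleaner and avoids the by-hand construction of $Y$ and $Y'$. In the forward direction you also unpack the one-line assertion ``$K$ acts trivially on $M$'': you identify $M_{P,E}^G\cong M_G^{K',F'}$ via Proposition~\ref{2.1}, read off that the vertex $P$ is Sylow in $K'$, and use that $K'$ is generated by its Sylow $p$-subgroups to force $K'=K$; the paper leaves this implicit.

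One discrepancy worth flagging: your Mackey computation produces the isomorphism $N_G(P)/N_K(P)\cong G/K$ and writes $E\cong{}_{N_G(P)}\Inf{}_{N_G(P)/N_K(P)}\Iso{}_{G/K}(F)$, whereas the statement (and the paper's proof) asserts $N_G(P)/P\cong G/K$ and $E\cong{}_{N_G(P)}\Inf{}_{N_G(P)/P}\Iso{}_{G/K}(F)$. These agree only when $N_K(P)=P$, which need not hold: for $G=A_5$, $p=5$, $P$ a Sylow $5$-subgroup and $E$ the trivial $\FF N_G(P)$-module, one has $K=A_5$, $N_K(P)=N_G(P)=D_{10}$, so $N_G(P)/P\cong C_2$ while $G/K$ is trivial. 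The inclusion-induced map $N_G(P)\to G/K$ has kernel $N_K(P)$, so your version $N_G(P)/N_K(P)\cong G/K$ is the correct one; your careful bookkeeping appears to be correcting a slip in the proposition as stated. (The outer inflation in the paper's formula, from $N_G(P)/P$, still recovers $E$ because $E$ is inflated through the intermediate quotient $N_G(P)/N_K(P)$; it is only the asserted isomorphism $N_G(P)/P\cong G/K$, and hence the typecheck of $\Iso{}_{G/K}$ as a functor to $\FF N_G(P)/P$-modules, that fails in general.)
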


\begin{proof}
Write $M = M_{P, E}^G$. If $M$ is exprojective
then $K$ acts trivially on $M$ and, perforce,
$N_K(P)$ acts trivially on $E$.

Conversely,
suppose $N_K(P)$ acts trivially on $E$. Then
$P$, being a vertex of $E$, must be a Sylow
$p$-subgroup of $N_K(P)$. Hence, $P$ is a
Sylow $p$-subgroup of $K$. By a Frattini
argument, $G = N_G(P)K$ and we have an
isomorphism $N_G(P)/P \cong G/K$ as
specified. Let $X = {}_G \Ind {}_{N_G(P)} (E)$.
The assumption on $E$ implies that $X$ has
well-defined $\FF$-submodules
$$Y = \left\{ {\sum}_k k \otimes_{N_G(P)} x :
  x \in E \right\} \; , \;\;\;\; \;\;\;\; Y' = \left\{
  {\sum}_k k \otimes_{N_G(P)} x_k : x_k \in E,
  {\sum}_k x_k = 0 \right\}$$
summed over a left transversal $k N_K(P)
\subseteq K$. Making use of the
well-definedness, an easy manipulation
shows that the action of $N_G(P)$ on $X$
stabilizes $Y$ and $Y'$. Similarly, $K$
stabilizes $Y$ and $Y'$. So $Y$ and $Y'$
are $\FF G$-submodules of $X$. Since
$|K : N_K(P)|$ is coprime to $p$, we have
$Y \cap Y' = 0$. Since $|K : N_K(P)| =
|G : N_G(P)|$, a consideration of dimensions
yields $X = Y \oplus Y'$.

Fix a left transversal $\cL$ for $N_K(P)$ in $K$.
For $g \in N_G(P)$ and $\ell \in \cL$, we can
write ${}^g \ell = \ell_g h_g$ with $\ell_g \in \cL$
and $h_g \in N_K(P)$. By the assumption on $E$
again, $h_g x = x$ for all $x \in E$. So
$$g {\sum}_\ell \ell \otimes x =
  {\sum}_\ell {}^g \ell \otimes gx =
  {\sum}_\ell \ell_g \otimes gx
  = {\sum}_\ell \ell \otimes gx$$
summed over $\ell \in \cL$. We have shown that
${}_{N_G(P)} \Res {}_G (Y) \cong E$. A similar
argument involving a sum over $\cL$ shows that
$K$ acts trivially on $Y$. Therefore, $Y \cong
M_G^{K, F}$. On the other hand, $Y$ is
indecomposable with vertex $P$ and, by the
Green correspondence, $Y \cong M_{P, E}^G$.
\end{proof}

We shall be making use of the following
closure property.

\begin{pro} \label{2.4}
Given exprojective $\FF G$-modules
$X$ and $Y$, then the $\FF G$-module
$X \otimes_\FF Y$ is exprojective.
\end{pro}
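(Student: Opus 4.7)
The plan is to exploit the equivalent characterization stated in the definition: an indecomposable $\FF G$-module is exprojective if and only if it is isomorphic to a direct summand of the permutation module $\FF G/K$ for some $K \unlhd G$. First I would decompose $X = \bigoplus_i X_i$ and $Y = \bigoplus_j Y_j$ into indecomposables, so that each $X_i$ is a summand of some $\FF G/K_i$ and each $Y_j$ is a summand of some $\FF G/L_j$, with $K_i, L_j \unlhd G$. Since tensor product distributes over direct sums, and exprojectivity of a general module is equivalent to exprojectivity of each of its indecomposable summands, the task reduces to showing that $\FF G/K \otimes_\FF \FF G/L$ is exprojective for arbitrary normal subgroups $K, L \unlhd G$.

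The key calculation is a routine orbit count. The module $\FF G/K \otimes_\FF \FF G/L$ is the permutation $\FF G$-module on the set $G/K \times G/L$ equipped with the diagonal action. Because $K$ and $L$ are both normal, the stabilizer of any point $(hK, h'L)$ equals $hKh^{-1} \cap h'L(h')^{-1} = K \cap L$, independent of the chosen representatives. Hence every $G$-orbit on $G/K \times G/L$ is $G$-isomorphic to $G/(K \cap L)$, and
$$\FF G/K \otimes_\FF \FF G/L \;\cong\; \bigoplus \FF G/(K \cap L)$$
as $\FF G$-modules, the sum being indexed by the $G$-orbits on $G/K \times G/L$. Since $K \cap L \unlhd G$, each summand is a permutation module of the very kind that features in the definition of exprojectivity, and so every one of its indecomposable direct summands is exprojective.

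Piecing this together, any indecomposable summand of $X \otimes_\FF Y$ is an indecomposable summand of some $X_i \otimes_\FF Y_j$, hence of $\FF G/K_i \otimes_\FF \FF G/L_j$, hence of a copy of $\FF G/(K_i \cap L_j)$, and is therefore exprojective. The one subtle point is to work from the outset with the ``summand of $\FF G/K$'' formulation of exprojectivity rather than the ``inflation of a projective'' formulation; the latter would invite an attempt to combine the inflation data of the various summands, which is awkward because inflating a projective $\FF G/K$-module along the quotient $G/(K \cap L) \twoheadrightarrow G/K$ generally destroys projectivity. Beyond that conceptual choice, no serious obstacle arises.
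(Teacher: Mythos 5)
Your proof is correct and follows essentially the same route as the paper: reduce to indecomposable $X$, $Y$ realized as summands of $\FF G/K$ and $\FF G/L$ with $K, L \unlhd G$, then observe that $\FF G/K \otimes_\FF \FF G/L$ is a direct sum of copies of $\FF G/(K \cap L)$ because normality forces every point of $G/K \times G/L$ to have stabilizer exactly $K \cap L$. The paper compresses this orbit computation into the phrase ``by Mackey decomposition,'' and appeals to Krull--Schmidt for the passage from ``summand of a summand'' to ``summand''; you have simply unpacked the same argument.
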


\begin{proof}
We may assume that $X$ and $Y$ are
indecomposable. Then $X$ and $Y$ are,
respectively, direct summands of permutation
$\FF G$-modules having the form $\FF G / K$
and $\FF G / L$ where $K \unlhd G \unrhd L$.
By Mackey decomposition and the
Krull--Schmidt Theorem, every indecomposable
direct summand of $X \otimes Y$ is a direct
summand of $\FF G / (K \cap L)$.
\end{proof}

\section{A canonical induction formula}

Throughout, we let $\frakK$ be a class of finite
groups that is closed under taking subgroups. We
shall understand that $G \in \frakK$. In clarification
of a hypothesis imposed in Section 1, we define
$\KK$ to be a field of characteristic zero that splits
for all the groups in $\frakK$. We shall abuse
notation, neglecting to use distinct expressions to
distinguish between a linear map and its extension
to a larger coefficient ring.

Specializing some general theory in Boltje
\cite{Bol98a}, we shall introduce a commutative
ring $\cT(G)$ and a ring epimorphism $\lin_G :
\cT(G) \rightarrow T(G)$. We shall show that the
$\ZZ[1/p]$-linear extension $\lin_G : \ZZ[1/p]
\cT(G) \rightarrow \ZZ[1/p] T(G)$ has a splitting
$\can_G : \ZZ[1/p] T(G) \rightarrow \ZZ[1/p] \cT(G)$.
As we shall see, $\can_G$ is the unique splitting
that commutes with restriction and isogation.

To be clear about the definition of $T(G)$, the
Grothendieck ring of the category of
$p$-permutation $\FF G$-modules, we mention
that the split short exact sequences are the
distinguished sequences determining the
relations on $T(G)$. The multiplication on $T(G)$
is given by tensor product over $\FF$. Given a
$p$-permutation $\FF G$-module $X$, we write
$[X]$ to denote the isomorphism class of $X$. We
understand that $[X] \in T(G)$. By Theorem \ref{2.2},
$$T(G) = \bigoplus_{(P, E) \in_G \cP(G)}
  \ZZ [M_{P, E}^G]$$
as a direct sum of regular $\ZZ$-modules, the
notation indicating that the index runs over
representatives of $G$-orbits. Let $T^{\rm ex}(G)$
denote the $\ZZ$-submodule of $T(G)$ spanned
by the isomorphism classes of exprojective
$\FF G$-modules. By Proposition \ref{2.4},
$T^{\rm ex}(G)$ is a subring of $T(G)$. By
Proposition \ref{2.1}
$$T^{\rm ex}(G) = \bigoplus_{(K, F) \in_G
  \cQ(G)} \ZZ [M_G^{K, F}] \; .$$

For $H \leq G$, the induction and restriction
functors ${}_G \Ind {}_H$ and ${}_H \Res {}_G$
give rise to induction and restriction maps
${}_G \ind {}_H$ and ${}_H \res {}_G$ between
$T(H)$ and $T(G)$. Similarly, given $L \in \frakK$
and an isomorphism $\theta : L \rightarrow G$,
we have an evident isogation map ${}_L^\ppp
\iso {}_G^\theta : T(L) \leftarrow T(G)$. In particular,
given $g \in G$, we have an evident conjugation
map ${}_{{}^g H}^\ppp \con {}_H^g$. Boltje noted
that, when $\frakK$ is the set of subgroups of a
given fixed finite group, $T$ is a Green functor in
the sense of \cite[1.1c]{Bol98a}. For arbitrary
$\frakK$, a class of admitted isogations must
be understood, and the isogations and inclusions
between groups in $\frakK$ must satisfy the
axioms of a category. Granted that, then $T$ is
still a Green functor in an evident sense whereby
the conjugations replaced by isogations.

Following a construction in \cite[2.2]{Bol98a},
adaptation to the case of arbitrary $\frakK$
being straightforward, we form the $G$-cofixed
quotient $\ZZ$-module
$$\cT(G) = \big( \bigoplus_{U \leq G}
  T^{\rm ex}(U) \big)_G$$
where $G$ acts on the direct sum via the
conjugation maps ${}_{{}^g U}^\ppp \con {}_U^g$.
Harnessing the Green functor structure of $T$,
the restriction functor structure of $T^{\rm ex}$
and noting that $T^{\rm ex}(G)$ is a subring of
$T(G)$, we make $\cT$ become a Green functor
much as in \cite[2.2]{Bol98a}, with the evident
isogation maps. In particular, $\cT(G)$ becomes
a ring, commutative because $T(G)$ is
commutative. Given $x_U \in T^{\rm ex}(U)$,
we write $[U, x_U]_G$ to denote the image of
$x_U$ in $\cT(G)$. Any $x \in \cT(G)$ can be
expressed in the form
$$x = \sum_{U \leq_G G} [U, x_U]_G$$
where the notation indicates that the index runs
over representatives of the $G$-conjugacy classes
of subgroups of $G$. Note that $x$ determines
$[U, x_U]$ and $x_G$ but not, in general, $x_U$.
Let $\cR(G)$
be the $G$-set of pairs $(U, K, F)$ where
$U \leq G$ and $(K, F) \in \cQ(U)$. We have
$$\cT(G) = \bigoplus_{U \leq_G G, (K, F)
  \in_{N_G(U)} \cQ(U)} \ZZ [U, [M_U^{K, F}]]
  = \bigoplus_{(U, K, F) \in_G \cR(G)}
  \ZZ [U, [M_U^{K, F}]] \; .$$

We define a $\ZZ$-linear map
$\lin_G : \cT(G) \rightarrow T(G)$ such that
$\lin_G [U, x_U] = {}_G \ind {}_U (x_U)$.
As noted in \cite[3.1]{Bol98a}, the family $(\lin_G
: G \in \frakK)$ is a morphism of Green functors
$\lin : \cT \rightarrow T$. In particular, the map
$\lin_G : \cT(G) \rightarrow T(G)$ is a ring
homomorphism. Extending to coefficients in
$\KK$, we obtain an algebra map
$$\lin_G \: : \: \KK \cT(G) \rightarrow
  \KK T(G) \; .$$

Let $\pi_G : T(G) \rightarrow T^{\rm ex}(G)$
be the $\ZZ$-linear epimorphism such that
$\pi_G$ acts as the identity on $T^{\rm ex}(G)$
and $\pi_G$ annihilates the isomorphism class
of every indecomposable non-exprojective
$p$-permutation $\FF G$-module. By
$\KK$-linear extension again, we obtain a
$\KK$-linear epimorphism $\pi_G : \KK T(G)
\rightarrow \KK T^{\rm ex}(G)$. After
\cite[5.3a, 6.1a]{Bol98a}, we define a
$\KK$-linear map
$$\can_G \: : \: \KK T(G) \rightarrow \KK \cT(G)
  \: , \; \xi \mapsto \frac{1}{|G|} \sum_{U, V
  \leq G} |U| \, \mob(U, V) [U, {}_U \res {}_V
  (\pi_V( {}_V \res {}_G (\xi)))]_G$$
where $\mob()$ denotes the M\"{o}bius
function on the poset of subgroups of $G$.

\begin{thm} \label{3.1}
Consider the $\KK$-linear map $\can_G$.

\noin {\bf (1)} We have $\lin_G \comp \can_G
= \id_{\KK T(G)}$.

\noin {\bf (2)} For all $H \leq G$, we have
${}_H \res {}_G \comp \can_G =
\can_H \comp {}_H \res {}_G$.

\noin {\bf (3)} For all $L \in \frakK$ and
isomorphisms $\theta : L \leftarrow G$,
we have ${}_L^\ppp \iso {}_G^\theta \comp
\can_G^\ppp = \can_L^\ppp \comp
{}_L^\ppp \iso {}_G^\theta$.

\noin {\bf (4)} $\can_G[X] = [X]$ for all
exprojective $\FF G$-modules $X$.

Those four properties, taken together for all
$G \in \frakK$, determine the maps $\can_G$.
\end{thm}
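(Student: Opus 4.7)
The plan is to view this theorem as a specialization of Boltje's canonical induction machinery \cite{Bol98a} to the Green functor $T$ with the linear projection $\pi$ onto $T^{\rm ex}$, and to verify the four properties in order before establishing uniqueness by induction on $|G|$.

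For property (1), expand $\lin_G \can_G(\xi)$ using the transitivity ${}_G \ind {}_U = {}_G \ind {}_V \comp {}_V \ind {}_U$ for $U \leq V$ to obtain
$$\lin_G \can_G(\xi) = \frac{1}{|G|} \sum_{V \leq G} {}_G \ind {}_V \Big( \sum_{U \leq V} |U| \mob(U, V) \, {}_V \ind {}_U {}_U \res {}_V (\alpha_V) \Big),$$
where $\alpha_V = \pi_V {}_V \res {}_G (\xi) \in \KK T^{\rm ex}(V)$. The inner sum is multiplication on $\alpha_V$ by the Burnside ring element $\sigma_V = \sum_{U \leq V} |U| \mob(U, V) [V/U]$, and a mark computation based on the classical identity $\sum_{U : W \leq U \leq V} \mob(U, V) = \delta_{W, V}$ yields $\varphi_W(\sigma_V) = |V| \delta_{W, V}$. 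Combined with a M\"{o}bius-style reassembly over the subgroup lattice of $G$ and the defining property of $\pi_V$, the outer sum collapses to $\xi$.

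Properties (2) and (3) follow by applying ${}_H \res {}_G$ and ${}_L^\ppp \iso {}_G^\theta$ to the defining formula, reorganizing the compositions of restrictions via Mackey's double coset formula, and invoking the standard compatibility of the M\"{o}bius function under the embedding of the subgroup lattice of $H$ in that of $G$; isogation compatibility is essentially automatic, since every ingredient of the formula is natural under group isomorphism. Property (4) is a direct computation: for $X$ exprojective one has $\pi_G [X] = [X]$, contributing the term $[G, [X]]_G$, while the contributions indexed by $V < G$ collapse to zero via M\"{o}bius inversion, exploiting the classification in Propositions \ref{2.1} and \ref{2.3}.

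For uniqueness, suppose $\can_G'$ is another family satisfying (1)--(4) and set $\delta_G = \can_G - \can_G'$. By (1), $\lin_G \comp \delta_G = 0$; by (2) and (3), $\delta$ commutes with restriction and isogation in the evident sense; by (4), $\delta_G$ annihilates exprojective classes. Inducting on $|G|$, for any $H < G$ one has ${}_H \res {}_G \comp \delta_G = \delta_H \comp {}_H \res {}_G = 0$ by the inductive hypothesis, which forces the $[U, \cdot]_G$ coordinates of $\delta_G(\xi)$ for $U < G$ to vanish, and the $[G, \cdot]_G$ coordinate is then pinned down by (1). The main obstacle I anticipate is the M\"{o}bius reassembly in property (1): although $\sigma_V$ has a clean description as a scaled primitive idempotent of $\KK B(V)$, the projection $\pi$ does not commute with restriction, so the family $(\alpha_V)_V$ is not restriction-coherent. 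One must therefore execute the M\"{o}bius inversion alongside careful tracking of how the exprojective part of ${}_V \res {}_G(\xi)$ breaks up as $V$ varies, with the structural results of Section 2 providing the cancellations that make the telescoping work.
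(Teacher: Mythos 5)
The paper's own proof of Theorem \ref{3.1} is a one-line citation: it specializes Boltje's general machinery \cite[5.3a, 6.1a]{Bol98a} to the Green functor $T$ with the restriction subfunctor $T^{\rm ex}$ and the projection $\pi$. You instead attempt a direct verification, and this is where the difficulty lies.

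Your sketches of properties (2), (3), (4) are essentially sound. For (4), once one notes that restrictions of exprojective modules are exprojective (so $\pi_V \comp {}_V\res{}_G [X] = {}_V\res{}_G[X]$ for all $V$), the formula collapses via $\sum_{V: U \leq V \leq G} \mob(U,V) = \delta_{U,G}$ to $[G,[X]]_G$; the invocation of Propositions \ref{2.1} and \ref{2.3} is a red herring, but the outcome is right. The uniqueness sketch by induction on $|G|$ is also the correct strategy, though ``${}_H\res{}_G\comp\delta_G = 0$ for all $H<G$ forces the $[U,\cdot]_G$ coordinates with $U<G$ to vanish'' is stated too bluntly: the Mackey formula for restriction on $\cT$ is not a coordinate projection, and one really needs the triangularity of the mark morphism of $\cT$ with respect to the subgroup poset.

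The genuine gap is in property (1), and you diagnose it yourself without curing it. Your reduction is correct as far as it goes: writing $\alpha_V = \pi_V({}_V\res{}_G(\xi))$, transitivity of induction and the identity $\sum_{U\leq V}|U|\mob(U,V)[V/U] = |V|e_V^V$ in $\KK B(V)$ reduce (1) to the claim
$$\frac{1}{|G|}\sum_{V\leq G} |V|\,{}_G\ind{}_V\!\left(e_V^V\cdot\alpha_V\right) = \xi.$$
If $\pi_V$ were the identity for every $V$, the projection formula ${}_G\ind{}_V(e_V^V\cdot{}_V\res{}_G\xi) = {}_G\ind{}_V(e_V^V)\cdot\xi$ and the Burnside-ring identity $\frac{1}{|G|}\sum_V|V|\,{}_G\ind{}_V(e_V^V) = 1_{B(G)}$ would finish. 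But $\pi$ does \emph{not} commute with restriction, so $\alpha_V\neq{}_V\res{}_G(\pi_G\xi)$ and Frobenius reciprocity cannot be applied; this is precisely the point at which canonical induction formulas become nontrivial. You acknowledge this (``the family $(\alpha_V)_V$ is not restriction-coherent'') and then only gesture at ``careful tracking'' and hoped-for ``cancellations.'' That is not an argument. Boltje's proof of \cite[5.3a]{Bol98a} handles exactly this issue via the mark morphism $\KK\cT(G)\to\prod_{V\leq G}\KK T^{\rm ex}(V)$ and its rational invertibility; one shows the $V$-mark of $\can_G(\xi)$ equals $\pi_V({}_V\res{}_G\xi)$ and deduces (1) from the compatibility of marks with $\lin$. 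Your proposal needs either to carry that argument out or to cite it, as the paper does.
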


\begin{proof}
In view of the discussion above, this follows from
the proof of \cite[5.3a]{Bol98a}.
\end{proof}

Parts (2) and (3) of the theorem can be interpreted
as saying that $\can_* : T \rightarrow \cT$ is a
morphism of restriction functors. It is not hard to
check that, when $\frakK$ is closed under the
taking of quotient groups, the functors $T$,
$T^{\rm ex}$, $\cT$ can be equipped with
inflation maps, and the morphisms $\lin_*$ and
$\can_*$ are compatible with inflation.

The latest theorem immediately yields the
following corollary.

\begin{cor} \label{3.2}
Given a $p$-permutation $\FF G$-module
$X$, then
$$[X] = \frac{1}{|G|} \sum_{U, V \leq G} |U| \,
  \mob(U, V) \, {}_G \ind {}_U \res {}_V
  (\pi_V({}_V \res {}_G[X])) \; .$$
\end{cor}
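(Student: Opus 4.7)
The plan is to observe that the corollary is essentially a direct application of Theorem \ref{3.1}(1), unpacked by substituting the explicit formula for $\can_G$ into the identity $\lin_G \comp \can_G = \id_{\KK T(G)}$. So my strategy is to start from $[X] = \lin_G(\can_G[X])$, insert the defining expression of $\can_G[X]$ given just above Theorem \ref{3.1}, and then push $\lin_G$ through the $\ZZ$-linear combination using the defining property $\lin_G[U, x_U]_G = {}_G \ind {}_U(x_U)$.

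More concretely, the first step is to apply Theorem \ref{3.1}(1) to the element $[X] \in T(G) \subseteq \KK T(G)$ to get
$$[X] = \lin_G(\can_G([X])).$$
The second step is to substitute the formula
$$\can_G([X]) = \frac{1}{|G|} \sum_{U, V \leq G} |U| \, \mob(U, V) \, \bigl[U, {}_U \res {}_V (\pi_V({}_V \res {}_G[X]))\bigr]_G.$$
The third step is to use $\KK$-linearity of $\lin_G$, so the scalar factors $|U|\,\mob(U,V)/|G|$ come out, and the summation is preserved.

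The final step is to use the definition $\lin_G([U, x_U]_G) = {}_G \ind {}_U(x_U)$ with $x_U = {}_U \res {}_V (\pi_V({}_V \res {}_G[X]))$, yielding exactly the claimed formula. There is no real obstacle here; the only thing to note is that the right-hand side of the corollary a priori lies in $\KK T(G)$, but since it equals $[X] \in T(G)$, the $\ZZ[1/p]$-integrality (or even $\ZZ$-integrality after cancellation of the $1/|G|$) is automatic from the equality itself.
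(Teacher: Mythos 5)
Your proof is correct and is exactly what the paper has in mind when it says Theorem \ref{3.1} ``immediately yields'' the corollary: apply part (1) to $[X]$, substitute the defining formula for $\can_G$, and push the ring homomorphism $\lin_G$ through the sum using $\lin_G[U, x_U]_G = {}_G \ind {}_U(x_U)$.
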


Given $p$-permutation $\FF G$-modules $M$
and $X$, with $M$ indecomposable, we write
$m_G(M, X)$ to denote the multiplicity of $M$ as
a direct summand of $X$. We write $\pi_G(X)$ to
denote the direct summand of $X$, well-defined up
to isomorphism, such that $[\pi_G(X)] = \pi_G[X]$.

\begin{lem} \label{3.3}
Let $\frakp$ be a set of primes. Suppose that,
for all $V \in \frakK$, all $p$-permutation
$\FF V$-modules $Y$, all $U \lhd V$ such that
$|V : U| \in \frakp$ and all $V$-fixed elements
$(K, F) \in \cQ(U)$, we have
$$m_U(M_U^{K, F}, \pi_U({}_U \Res {}_V (Y)))
  = \sum_{(J, E) \in \cQ(V)} m_V(M_U^{K, F},
  M_V^{J, E}) \, m_V(M_V^{J, E}, \pi_V(Y)) \; .$$
Then, for all $G \in \frakK$, we have $|G|_{\frakp'} \,
\can_G[Y] \in \cT(G)$, where $|G|_{\frakp'}$ denotes
the $\frakp'$-part of $|G|$.
\end{lem}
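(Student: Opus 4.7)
The plan is to analyze the coefficients of $\can_G[Y]$ in the $\ZZ$-basis of $\cT(G)$ indexed by $(U,K,F) \in_G \cR(G)$, and show that multiplication by $|G|_{\frakp'}$ clears their denominators. I would begin by regrouping the defining sum over the inner subgroup $U$ and passing to $G$-conjugacy representatives; using $G$-equivariance of the inner construction (which forces $N_G(U)$-invariance), this yields
$$\can_G[Y] = \sum_{U \leq_G G} \frac{|U|}{|N_G(U)|}\,[U, \beta_U]_G,\qquad \beta_U := \sum_{V \geq U}\mob(U,V)\,\res_U^V\pi_V\res_V^G[Y].$$
For each $N_G(U)$-orbit representative $(K,F) \in \cQ(U)$ with $G$-stabilizer $S := S_G(U,K,F) \supseteq U$, the coefficient of the basis element $[U, [M_U^{K,F}]]_G$ equals $c/|S:U|$, where $c := m_U(M_U^{K,F}, \beta_U)\in\ZZ$. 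Since $|S:U|_{\frakp'}$ divides $|G|_{\frakp'}$, the lemma reduces to showing $|S:U|_\frakp \mid c$ for every such triple.

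Next, I would exploit the hypothesis to simplify $c$. Let $a(V) := m_U(M_U^{K,F},\pi_U\res_U^V\pi_V\res_V^G[Y])$, so that $c = \sum_{V \geq U}\mob(U,V)\,a(V)$. Applied to the $V$-module $\res_V^G Y$ and invoking linearity of $m_V$ in its second slot, the hypothesis says: for every $V$ with $U \lhd V \leq S$ and $V/U$ a $\frakp$-group, one has
$$a(V) = m_U(M_U^{K,F}, \pi_U\res_U^V\res_V^G[Y]) = m_U(M_U^{K,F}, \pi_U\res_U^G[Y]) =: a^*,$$
independently of $V$. Combined with $\sum_{V \geq U}\mob(U,V) = \delta_{U,G}$, this allows me (for $U < G$) to rewrite
$$c = \sum_{V \geq U}\mob(U,V)\bigl(a(V) - a^*\bigr),$$
whose summand vanishes on the ``good'' set $\mathcal{V} := \{V : U \lhd V \leq S,\ V/U \text{ a } \frakp\text{-group}\}$.

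The heart of the proof is the combinatorial divisibility step. I plan to use the conjugation action of $S/U$ on the subgroup interval $[U,G]$, which preserves both $\mob(U,\cdot)$ and $a(\cdot)$ (as $S$ fixes $(K,F)$), and hence decomposes $c$ into orbit sums. Orbits whose $(S/U)$-stabilizer is a $\frakp'$-subgroup contribute multiples of $|S:U|_\frakp$ automatically. The main obstacle, and the technical crux, is showing that the remaining orbits --- those whose stabilizer has nontrivial $\frakp$-part --- either fall inside $\mathcal{V}$ (and so have zero summand) or can be telescoped away via a Weisner-type identity for $\mob$ on the sublattice of $V$'s normalized by a Sylow $\frakp$-subgroup $P/U$ of $S/U$. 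I expect this argument to mirror the $\frakp'$-primary integrality step from \cite[Section 5]{Bol98a}, with the hypothesis playing precisely the role of the local exprojectivity compatibility needed there.
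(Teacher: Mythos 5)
Your proposal diverges completely from the paper's proof, which is a one-line citation: the paper invokes Boltje's general integrality criterion \cite[9.4]{Bol98a}, after noting that the lemma's hypothesis (verified only for $U \lhd V$ of prime index $|V:U| \in \frakp$) implies Boltje's original hypothesis (for $V/U$ a cyclic $\frakp$-group) by ``an easy inductive argument.'' You instead attempt to re-derive Boltje's criterion from scratch, which is a legitimate ambition, and your first two reduction steps --- the regrouping of $\can_G[Y]$ by $G$-conjugacy to obtain the coefficient $c/|S:U|$ with $S = N_G(U,K,F)$, and the rewrite $c = \sum_{V \geq U}\mob(U,V)(a(V)-a^*)$ using $\sum_V \mob(U,V)=0$ for $U<G$ --- are correct and cleanly stated.

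However, there are two genuine gaps. First, you apply the hypothesis to \emph{all} $V$ with $U \lhd V \leq S$ and $V/U$ a $\frakp$-group, but the lemma's hypothesis is only posited for $|V:U| \in \frakp$, i.e., prime index. Passing to general $\frakp$-group quotients requires the induction the paper alludes to, and it is not entirely automatic: chaining $U \lhd U_1 \lhd \cdots \lhd V$ needs the hypothesis applied to each layer with the appropriate (possibly different) intermediate pairs $(K_i, F_i)$, so you have to check that what comes out of one step feeds correctly into the next. You assert the conclusion without this argument.

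Second, and more seriously, the entire combinatorial crux --- establishing that $|S:U|_\frakp$ divides $c$ by analyzing $S/U$-orbits on the interval $[U,G]$ and telescoping away the orbits with non-$\frakp'$ stabilizers via a Weisner-type M\"obius identity --- is not executed. You explicitly frame this as something you ``plan to use'' and ``expect'' to work, mirroring Boltje's Section~5. But this step \emph{is} the content of Boltje's integrality theorem; the easy reductions before it are setup. Orbits with stabilizers of nontrivial $\frakp$-part need not consist of $V$ with $V/U$ a $\frakp$-group (a $\frakp$-element of $S/U$ can normalize subgroups $V$ of arbitrary index), so the ``fall inside $\mathcal{V}$'' alternative does not cover them, and the actual Weisner/Brown--Quillen--Dress style argument for the remaining orbits is nontrivial. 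Without this step, your proof does not close. The paper sidesteps all of this by citing \cite[9.4]{Bol98a}; if you want a self-contained proof you must reconstruct that argument in full.
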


\begin{proof}
This is a special case of \cite[9.4]{Bol98a}. Indeed,
an easy inductive argument justifies our
imposition of the condition $|V : U| \in \frakp$
in place of the weaker condition that $V/U$
is a cyclic $\frakp$-group.
\end{proof}

We can now prove the $\ZZ[1/p]$-integrality
of $\can_G$.

\begin{thm} \label{3.4}
The $\KK$-linear map $\can_G$ restricts to
a $\ZZ[1/p]$-linear map $\ZZ[1/p] T(G)
\rightarrow$ \linebreak
$\ZZ[1/p] \cT(G)$.
\end{thm}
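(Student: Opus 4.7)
The plan is to derive the theorem from Lemma~\ref{3.3} by specializing to $\frakp$ equal to the set of all primes different from $p$. With this choice, $|G|_{\frakp'} = |G|_p$ is a power of $p$, hence a unit of $\ZZ[1/p]$. The conclusion $|G|_p \cdot \can_G[Y] \in \cT(G)$ of Lemma~\ref{3.3} therefore rearranges to $\can_G[Y] \in \tfrac{1}{|G|_p}\,\cT(G) \subseteq \ZZ[1/p]\,\cT(G)$ for every class $[Y] \in T(G)$, which is exactly Theorem~\ref{3.4}.

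All that remains is to verify the hypothesis of Lemma~\ref{3.3} for this $\frakp$: for every $V \in \frakK$, every $p$-permutation $\FF V$-module $Y$, every $U \lhd V$ with $|V:U|$ a prime $\ell \neq p$, and every $V$-fixed $(K,F) \in \cQ(U)$, the displayed multiplicity identity must hold. Expanding $\pi_V(Y) = \sum_{(J,E)} m_V(M_V^{J,E}, Y)\,[M_V^{J,E}]$ in the basis of $T^{\rm ex}(V)$ and equating coefficients of $[M_U^{K,F}]$, the identity reduces to the single clean claim that $\pi_U \comp {}_U\res{}_V\bigl([M_V^{P,E}]\bigr) = 0$ whenever $M_V^{P,E}$ is non-exprojective, i.e.\ restriction along a $p'$-index normal subgroup never creates an exprojective summand out of a non-exprojective indecomposable $p$-permutation module.

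To establish this, let $M = M_V^{P,E}$ be non-exprojective and let $K$ be the normal closure of $P$ in $V$; since $|V:U|$ is coprime to $p$ we have $P \leq U$, and by Mackey every indecomposable summand of ${}_U\Res{}_V(M)$ has vertex a $U$-conjugate of $P$ of full order $|P|$. Green correspondence inside $U$ identifies those summands as $M_U^{P,E'}$, with $E'$ ranging over the indecomposable summands of ${}_{N_U(P)}\Res{}_{N_V(P)}(E)$. Proposition~\ref{2.3} tells us $M$ is non-exprojective precisely because $N_K(P)$ acts non-trivially on $E$; since $N_K(P) \cap U$ has $p'$-index in $N_K(P)$, it too must act non-trivially, and tracking this non-triviality through the restriction to $N_U(P)$ together with the $V$-fixedness of $(K,F)$ yields that the normaliser of $P$ inside the normal closure $K'$ of $P$ in $U$ acts non-trivially on every such $E'$; by Proposition~\ref{2.3} applied inside $U$, each $M_U^{P,E'}$ is then non-exprojective. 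The main obstacle is this final propagation-of-non-triviality step, where one must rule out the a priori possibility that the non-trivial action of $N_K(P)$ on $E$ ``spreads out'' across different conjugate summands of its restriction and becomes trivial on some summand; a potentially cleaner route is to phrase the whole argument in terms of the Brauer quotient $M \mapsto M(P)$, which commutes with restriction and whose non-vanishing detects non-exprojectivity via the criterion of Proposition~\ref{2.3}.
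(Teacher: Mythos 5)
Your setup is the same as the paper's: invoke Lemma~\ref{3.3} with $\frakp$ the set of primes different from $p$, so that $|G|_{\frakp'}$ is a power of $p$ and the conclusion of the lemma gives the theorem, and reduce the verification of the hypothesis to the case of indecomposable $Y$. After that point, however, there is a genuine gap, and you have in fact acknowledged it. The difficulty is structural: you have over-reduced. Lemma~\ref{3.3} only requires the multiplicity identity for $V$-fixed $(K,F) \in \cQ(U)$, and that $V$-fixedness is precisely the lever the paper uses. Your ``single clean claim'' that $\pi_U \comp {}_U\res{}_V\bigl([M_V^{P,E}]\bigr) = 0$ for every non-exprojective $M_V^{P,E}$ is strictly stronger than what is needed, throws away the $V$-fixedness hypothesis, and then you run into exactly the obstruction you name: a non-trivial $N_K(P)$-action on $E$ need not restrict to a non-trivial action on a given summand $E'$ of ${}_{N_U(P)}\Res{}_{N_V(P)}(E)$. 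Nothing in the Green-correspondence bookkeeping you set up rules this out, and the proposal stops without a proof.

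The paper's argument avoids the propagation-of-non-triviality problem entirely by staying with the $V$-fixed $(K,F)$. Suppose $Y$ is indecomposable and non-exprojective, so $\pi_V(Y) = 0$ and the right-hand side of the identity vanishes; it then suffices to show $M_U^{K,F}$ is \emph{not} a summand of ${}_U\Res{}_V(Y)$. Assuming otherwise, relative $U$-projectivity of $Y$ (vertices of $Y$ lie in $U$ because $|V:U|$ is prime to $p$) gives $Y \mid {}_V\Ind{}_U(X)$ for some indecomposable summand $X$ of ${}_U\Res{}_V(Y)$, and Mackey together with $V$-stability of $(K,F)$ forces $X \cong M_U^{K,F}$. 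Now the crucial use of $V$-fixedness: it implies $K \lhd V$, whence
$${}_V\Ind{}_U\Inf{}_{U/K}(F) \;\cong\; {}_V\Inf{}_{V/K}\Ind{}_{U/K}(F),$$
and the right-hand side is inflated from a projective $\FF V/K$-module, hence exprojective; so $Y$ is exprojective, a contradiction. This commutation of induction past inflation is the step that your proposal is missing, and it is what makes the argument close without needing to control how non-triviality of the $N_K(P)$-action distributes across summands. If you want to salvage your approach, you should retreat from the blanket claim back to the $V$-fixed case and look for where $K \lhd V$ can be brought to bear; the Brauer-quotient reformulation you gesture at would also need to exploit $V$-fixedness to go through.
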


\begin{proof}
Let $\frakp$ be the set of primes distinct from
$p$. Let $V$, $Y$, $U$, $K$, $F$ be as in the
latest lemma. We must obtain the equality in the
lemma. We may assume that $Y$ is
indecomposable. If $Y$ is exprojective, then
$\pi_U({}_U \Res {}_V(Y)) \cong {}_U \Res {}_V
(Y)$ and $\pi_V(Y) \cong X$, whence the
required equality is clear. So we may assume
that $Y$ is non-exprojective. Then $\pi_V(Y)$
is the zero module. It suffices to show that
$M_U^{K, F}$ is not a direct summand of
${}_U \Res {}_V(Y)$. For a contradiction,
suppose otherwise. The hypothesis on $|V : U|$
implies that $U$ contains the vertices of $Y$.
So $Y \mid {}_V \Ind {}_U (X)$ for some
indecomposable $p$-permutation
$\FF U$-module $X$. Bearing in mind that
$(K, F)$ is $V$-stable, a Mackey decomposition
argument shows that $M_U^{K, F} \cong X$.
The $V$-stability of $(K, F)$ also implies that
$K \lhd V$. So
$$Y \mid {}_V \Ind {}_U \Inf {}_{U/K} (F) \cong
  {}_V \Inf {}_{V/K} \Ind {}_{U/K} (F) \; .$$
We deduce that $Y$ is exprojective. This is
a contradiction, as required.
\end{proof}

\begin{pro} \label{3.5}
The $\ZZ$-linear map $\lin_G : \cT(G)
\rightarrow T(G)$ is surjective. However, the
$\ZZ[1/p]$-linear map $\can_G : \ZZ[1/p] T(G)
\rightarrow \ZZ[1/p] \cT(G)$ need not restrict
to a $\ZZ$-linear map $T(G) \rightarrow \cT(G)$.
Indeed, putting $p = 3$ and $G = \SL_2(3)$,
then the isomorphically unique indecomposable
non-simple non-projective $p$-permutation
$\FF G$-module $Y$ satisfies
$3 [Q_8, (\can_G[Y])_{Q_8}] = 2[Q_8, X]$, where
$X$ is the isomorphically unique $2$-dimensional
simple $\FF Q_8$-module.
\end{pro}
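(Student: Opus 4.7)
Plan: The proposition has three claims---surjectivity of $\lin_G$, failure of $\ZZ$-integrality of $\can_G$, and a numerical identity for $G = \SL_2(3)$ at $p = 3$---and my plan is to prove the first directly and to derive the second from the third.

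For surjectivity, I argue by induction on vertex size. Given an indecomposable $p$-permutation module $M_{P, E}^G$, Green correspondence yields $\,{}_G\Ind{}_{N_G(P)}(E) \cong M_{P, E}^G \oplus N$ where every summand of $N$ has vertex strictly subconjugate to $P$. Since $E$ is exprojective by the definition of $\cP(G)$, the left-hand side represents $\lin_G[N_G(P), [E]]$. The base case $P = 1$ is immediate because $M_{1, E}^G = E$ is then projective and hence exprojective, so $[E] = \lin_G[G, [E]]$. Induction on $|P|$ places every $[M_{P, E}^G]$ in the image.

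For the $\SL_2(3)$ computation, set $Q = Q_8 \unlhd G$ with $G/Q \cong C_3$, and fix a Sylow $3$-subgroup $P \cong C_3$ so that $N_G(P) \cong C_6 = P \times Z(G)$. I first identify the indecomposable $p$-permutation modules: the simple $\FF G$-modules are $L(0), L(1), L(2)$ of dimensions $1, 2, 3$, with $L(2)$ projective of defect zero, and the principal block carries PIMs $P(L(0))$ and $P(L(1))$ of dimensions $3$ and $6$. Since the only $p'$-residue-free normal subgroups of $G$ are $1$ and $G$, Proposition \ref{2.1} identifies the indecomposable exprojective modules as the three PIMs together with the trivial module $L(0)$. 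Theorem \ref{2.2} then produces exactly one further indecomposable $p$-permutation module, namely $Y$, the Green correspondent in $G$ of the inflation to $N_G(P)$ of the sign character of $N_G(P)/P \cong C_2$; this $Y$ is the unique indecomposable $p$-permutation module that is neither simple nor projective.

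Finally I evaluate $(\can_G[Y])_Q$. The interval of subgroups $V$ with $Q \leq V \leq G$ is $\{Q, G\}$, with $\mob(Q, Q) = 1$ and $\mob(Q, G) = -1$; $\pi_G[Y] = 0$ since $Y$ is non-exprojective; and $\pi_Q$ is the identity on $T(Q)$ since $Q$ is a $p'$-group. Hence the $Q$-component of $\can_G[Y]$ reduces to $\tfrac{|Q|}{|G|}\,{}_Q\res{}_G[Y] = \tfrac{1}{3}\,{}_Q\res{}_G[Y]$, and it remains to show $\,{}_Q\Res{}_G(Y) \cong 2X$. A single-double-coset Mackey calculation (using $G = Q \cdot C_6$ and $Q \cap C_6 = Z(G)$) gives $\,{}_Q\Res{}_G \,{}_G\Ind{}_{C_6}(\text{inflated sign}) \cong \,{}_Q\Ind{}_{Z(Q)}(\text{sign}) \cong 2X$. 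Comparing this with the restrictions to $Q$ of the three PIMs, which are respectively $3 \cdot \text{triv}$, $3X$, and the sum of the three nontrivial linear characters of $Q$, rules out any projective summand in $\,{}_G\Ind{}_{C_6}(\text{inflated sign})$, so that induction equals $Y$ and $\,{}_Q\Res{}_G(Y) \cong 2X$. The stated identity follows, and since $[X]$ is a $\ZZ$-basis element of $T^{\rm ex}(Q)_G$ (because $X$ is $G$-fixed among the simples of $\FF Q$), the coefficient $\tfrac{2}{3}$ witnesses the failure of $\ZZ$-integrality. The main obstacle is the Mackey bookkeeping required to identify $\,{}_G\Ind{}_{C_6}(\text{inflated sign})$ with $Y$ itself.
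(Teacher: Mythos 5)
Your proof is correct, and it is considerably more explicit than the paper's, which disposes of the proposition in two sentences: surjectivity of $\lin_G$ is attributed to Boltje's result that $T(G)$ is spanned by induced isomorphism classes of one-dimensional modules (which are exprojective), and the $\SL_2(3)$ computation is dismissed as ``routine.'' Your surjectivity argument is genuinely different and arguably more self-contained: instead of invoking \cite[4.7]{Bol98b}, you run an induction on $|P|$ using Green correspondence, the key point being that for a $p$-permutation module $M_{P,E}^G$ with $P \neq 1$, the non-correspondent summands of ${}_G\Ind{}_{N_G(P)}(E)$ all have vertices in $\{P \cap {}^gP : g \notin N_G(P)\}$, hence strictly smaller vertex; this gives a unitriangular change of basis between $\{[M_{P,E}^G]\}$ and $\{{}_G\ind{}_{N_G(P)}[E]\}$. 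What Boltje's route buys is immediacy (one citation); what yours buys is independence from the linear-source canonical induction machinery and a transparent reason for surjectivity. Your $\SL_2(3)$ verification is complete and accurate: the identification of the indecomposable $p$-permutation modules, the observation that $\pi_Q = \id$ since $Q_8$ is a $p'$-group while $\pi_G[Y] = 0$, the collapse of the sum to the single term $V = Q$ (since $\mob(Q,G)$ multiplies a zero), and the Mackey computation ${}_Q\Res{}_G\,{}_G\Ind{}_{C_6}(\text{sign}) \cong {}_Q\Ind{}_{Z(Q)}(\text{sign}) \cong 2X$ are all right. The step ruling out projective summands of ${}_G\Ind{}_{C_6}(\text{sign})$ via restricting PIMs to $Q$ works, though a quicker route is the dimension count: any projective summand has dimension divisible by $3$, forcing the vertex-$P$ correspondent to have dimension $1$, hence to be trivial, contradicting that the trivial module's Green correspondent at $C_3$ is the trivial $\FF C_6$-module rather than the sign module.
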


\begin{proof}
Since every $1$-dimensional $\FF G$-module
is exprojective, the surjectivity of the
$\ZZ$-linear map $\lin_G$ follows from Boltje
\cite[4.7]{Bol98b}. Routine techniques confirm
the counter-example.
\end{proof}

\section{The $\KK$-semisimplicity of the
commutative algebra $\KK \cT(G)$}

Let $\cI(G)$ be the $G$-set of pairs $(P, s)$
where $P$ is a $p$-subgroup of $G$ and
$s$ is a $p'$-element of $N_G(P)/P$.
Choosing and fixing an arbitrary isomorphism
between a suitable torsion subgroup of
$\KK - \{ 0 \}$ and a suitable torsion subgroup
of $\FF - \{ 0 \}$, we can understand Brauer
characters of $\FF G$-modules to have values in
$\KK$. For a $p'$-element $s \in G$, we define
a species $\epsilon_{1, s}^G$ of $\KK T(G)$, we
mean, an algebra map $\KK T(G) \rightarrow \KK$,
such that $\epsilon_{1,s}^G[M]$ is the value, at
$s$, of the Brauer character of a $p$-permutation
$\FF G$-module $M$. Generally,
for $(P, s) \in \cI(G)$, we define a species
$\epsilon_{P, s}^G$ of $\KK T(G)$ such that
$\epsilon_{P, s}^G [M] =
\epsilon_{1,s}^{N_G(P)/P} [M(P)]$, where
$M(P)$ denotes the $P$-relative Brauer
quotient of $M^P$. The next result,
well-known, can be found in
Bouc--Th\'{e}venaz \cite[2.18, 2.19]{BT10}.

\begin{thm} \label{4.1}
Given $(P, s), (P', s') \in \cI(G)$, then
$\epsilon_{P, s}^G = \epsilon_{P', s'}^G$ if
and only if we have $G$-conjugacy $(P, s)
=_G (P', s')$. The set $\{ \epsilon_{P, s}^G :
(P, s) \in_G \cI(G) \}$ is the set of species of
$\KK T(G)$ and it is also a basis for the
dual space of $\KK T(G)$. The dual basis
$\{ e_{P, s}^G : (P, s) \in_G \cI(G) \}$ is the
set of primitive idempotents of $\KK T(G)$.
As a direct sum of trivial algebras over $\KK$,
we have
$$\KK T(G) = \bigoplus_{(P, s) \in_G \cI(G)}
  \KK e_{P, s}^G \; .$$
\end{thm}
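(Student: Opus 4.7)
The plan is to establish the theorem in four steps: (i) each $\epsilon_{P,s}^G$ is a well-defined algebra map $\KK T(G) \to \KK$; (ii) species attached to distinct $G$-orbits on $\cI(G)$ are linearly independent and pairwise distinct; (iii) the number of these species equals $\dim_\KK \KK T(G)$, so they exhaust $\mathrm{Spec}(\KK T(G))$; (iv) the dual basis then provides a complete set of primitive orthogonal idempotents, yielding the asserted product decomposition. Step (i) is routine: for a $p$-subgroup $P$, the Brauer construction $M \mapsto M(P)$ is a tensor-preserving functor from $p$-permutation $\FF G$-modules to $p$-permutation $\FF N_G(P)/P$-modules, and Brauer-character evaluation at a $p'$-element is a ring map $T(N_G(P)/P) \to \KK$; composing yields $\epsilon_{P,s}^G$.

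For (ii) and the bijection underlying (iii), I would evaluate $\epsilon_{P,s}^G$ on the canonical basis $\{[M_{Q,E}^G]\}$ from Theorem \ref{2.2}. The key facts are that $M_{Q,E}^G(P) = 0$ unless $P$ is $G$-subconjugate to $Q$, and that when $P =_G Q$ the module $M_{Q,E}^G(P)$ is isomorphic to the indecomposable projective $\FF N_G(P)/P$-module $\oo{E}$ associated to $E$ via Proposition \ref{2.1}. This yields a pairing matrix that is block-triangular with respect to the partial order on $G$-classes of $p$-subgroups, the diagonal block at a fixed $P$ being the matrix of Brauer characters of the indecomposable projectives of $\FF N_G(P)/P$ evaluated at the $p'$-classes of $N_G(P)/P$. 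That block equals the Cartan matrix times the invertible matrix of simple Brauer characters, hence is invertible; this simultaneously gives linear independence of the species, their pairwise distinctness, and the equivariant bijection $\cP(G)/G \leftrightarrow \cI(G)/G$. Combined with Theorem \ref{2.2}, the number of species equals $\dim_\KK \KK T(G)$.

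Step (iv) is then formal: any commutative $\KK$-algebra of dimension $n$ admitting $n$ pairwise distinct algebra maps to $\KK$ is, by Dedekind independence of characters, isomorphic to $\KK^n$ via $a \mapsto (\epsilon_i(a))_i$, and the basis dual to the species is precisely its set of primitive idempotents. The main obstacle is the triangularity and diagonal-invertibility computation in step (ii): one must track carefully how the Brauer construction interacts with the vertex and Green-correspondent structure of $M_{Q,E}^G$ from Section 2, so that both the off-diagonal vanishing and the precise non-zero diagonal entries emerge, and one must check that the Brauer--Nesbitt correspondence is $N_G(P)$-equivariant in order to match orbit counts on both sides. Everything else then follows by $\KK$-duality between $\KK T(G)$ and its spectrum.
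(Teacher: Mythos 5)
The paper does not give its own proof of this theorem: it is stated as well known, with a citation to Bouc--Th\'{e}venaz \cite[2.18, 2.19]{BT10}. Your sketch reproduces, correctly, the argument underlying that reference: the Brauer construction gives a block-triangular evaluation matrix against the basis $\{[M_{Q,E}^G]\}$, ordered by $G$-subconjugacy of the $p$-subgroups, with diagonal blocks the Brauer character tables of the projective indecomposable $\FF N_G(P)/P$-modules (invertible, being the product of the nonsingular Cartan matrix with the invertible Brauer character table of simples); linear independence, distinctness, the orbit bijection $\cP(G)/G \leftrightarrow \cI(G)/G$, and then the formal passage from a basis of species to a basis of orthogonal primitive idempotents all follow as you say. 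The one point worth making explicit is that when $P = {}^g Q$ with $g \notin N_G(Q)$, the identification $M_{Q,E}^G(P) \cong \oo{E}$ is an isomorphism of $\FF N_G(P)/P$-modules only after transporting $\oo{E}$ along conjugation by $g$; this does not affect the character-value computation but is needed for the $N_G(P)$-equivariance you flag in matching orbit counts.
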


Let $\cJ(G)$ be the $G$-set of pairs $(L, t)$ where
$L$ is a $p'$-residue-free normal subgroup of $G$
and $t$ is a $p'$-element of $G/L$. We define a
species $\epsilon_G^{L, t}$ of $\KK T^{\rm ex}(G)$
such that, given an indecomposable exprojective
$\FF G$-module $M$, then $\epsilon_G^{L, t}[M]
= 0$ unless $M$ is the inflation of an
$\FF G / L$-module $\oo{M}$, in which case,
$\epsilon_G^{L, t}$ is the value, at $t$, of the
Brauer character of $\oo{M}$. It is easy to show
that, given a $p$-subgroup $P \leq G$ and a
$p'$-element $s \in N_G(P)/P$, then
$\epsilon_{P, s}^G[M] = \epsilon_G^{L, t}[M]$ for
all exprojective $\FF G$-modules $M$ if and only
if $L$ is the normal closure of $P$ in $G$ and
$t$ is conjugate to the image of $s$ in $G/L$.
Hence, via the latest theorem, we obtain the
following lemma.

\begin{lem} \label{4.2}
Given $(L, t), (L', t') \in \cJ(G)$, then
$\epsilon_G^{L, t} = \epsilon_G^{L', t'}$ if and
only if $L = L'$ and $t =_{G/L} t'$, in other words,
$(L, t) =_G (L', t')$. The set $\{ \epsilon_G^{L, t}
: (L, t) \in_G \cJ(G) \}$ is the set of species of
$\KK T^{\rm ex}(G)$ and it is also a basis for
the dual space of $\KK T^{\rm ex}(G)$.
\end{lem}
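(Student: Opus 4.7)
The plan is to deduce the lemma from Theorem~\ref{4.1} together with the identification stated just before it: for any $p$-subgroup $P \leq G$ and $p'$-element $s \in N_G(P)/P$, one has $\epsilon_{P,s}^G[M] = \epsilon_G^{L,t}[M]$ for every exprojective $\FF G$-module $M$ if and only if $L$ is the normal closure of $P$ in $G$ and $t$ is $G/L$-conjugate to the image of $s$. Writing $\Phi(P,s)$ for the pair $(L,t)$ so attached to $(P,s)$, the identification says precisely that $\epsilon_{P,s}^G |_{\KK T^{\rm ex}(G)} = \epsilon_G^{L,t}$ if and only if $(L,t) =_G \Phi(P,s)$.

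First I would verify that $\Phi$ is $G$-equivariant and that the induced map $\cI(G)/G \to \cJ(G)/G$ is surjective. Given $(L,t) \in \cJ(G)$, take $P$ to be a Sylow $p$-subgroup of $L$: since $L$ is $p'$-residue-free and $P \leq L \unlhd G$, the normal closure of $P$ in $G$ coincides with $L$. A Frattini argument yields $G = N_G(P) L$, hence a surjection $N_G(P)/P \twoheadrightarrow G/L$, and any lift of $t$, replaced by its $p'$-part, gives an $s \in N_G(P)/P$ with $\Phi(P,s) = (L,t)$.

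Next I would invoke Theorem~\ref{4.1} to identify $\KK T(G)$ with a finite product of copies of $\KK$, the $\epsilon_{P,s}^G$ being the coordinate projections. A standard structural observation is that any unital $\KK$-subalgebra of such a product has trivial nilradical, hence is itself a finite product of copies of $\KK$, whose species are precisely the nonzero restrictions of the ambient projections. Applied to $\KK T^{\rm ex}(G) \hookrightarrow \KK T(G)$, together with the fact that the trivial module is exprojective (so the restrictions are nonzero and the restriction map on species is onto), this already gives the $\KK$-semisimplicity of $\KK T^{\rm ex}(G)$ and shows that every species of $\KK T^{\rm ex}(G)$ has the form $\epsilon_{P,s}^G |_{\KK T^{\rm ex}(G)}$, which by the identification equals $\epsilon_G^{\Phi(P,s)}$. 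Combined with the surjectivity of $\Phi$, the set of species of $\KK T^{\rm ex}(G)$ is therefore contained in $\{ \epsilon_G^{L,t} : (L,t) \in_G \cJ(G) \}$.

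Finally, for the ``if and only if'' in the lemma: the ``if'' direction is immediate, since Brauer characters are $p'$-class functions. For ``only if'', suppose $\epsilon_G^{L,t} = \epsilon_G^{L',t'}$; using surjectivity of $\Phi$, pick $(P,s)$ with $\Phi(P,s) =_G (L,t)$, and apply the ``only if'' direction of the identification to $(L',t')$ to conclude $\Phi(P,s) =_G (L',t')$, whence $(L,t) =_G (L',t')$. This pins down the $\epsilon_G^{L,t}$ as the full set of species, and semisimplicity promotes them to a basis of the dual space. I expect the main friction to lie in the third step, namely the structural fact that unital subalgebras of $\prod \KK$ decompose as products of copies of $\KK$ with species given by restriction; everything else reduces cleanly to the identification quoted from the paper.
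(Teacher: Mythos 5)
Your proposal is correct and follows essentially the route the paper indicates (it gives no written proof, only the remark that the lemma follows from the stated identification $\epsilon_{P,s}^G|_{\KK T^{\rm ex}(G)} = \epsilon_G^{\Phi(P,s)}$ together with Theorem~\ref{4.1}). Your packaging of the semisimplicity step via the structural fact that a unital subalgebra of $\KK^n$ is again a product of copies of $\KK$ with species given by restricting coordinate projections is a clean way to fill in the ``hence,'' and your verification that $\Phi$ is onto (Sylow $P$ in $L$, normal closure of $P$ is $L$ since $L$ is $p'$-residue-free, Frattini giving $N_G(P)/P \twoheadrightarrow G/L$, replacing a lift of $t$ by its $p'$-part) is exactly the expected argument.
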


Let $\cK(G)$ be the $G$-set of triples $(V, L, t)$
where $V \leq G$ and $(L, t) \in \cJ(V)$. Given
$(L, t) \in \cJ(G)$, we define a species
$\epsilon_{G, L, t}^G$ of $\KK \cT(G)$ such
that, for $x$ as in Section 3,
$$\epsilon_{G, L, t}^G(x) =
  \epsilon_G^{L, t}(x_G) \; .$$
Generally, for $(V, L, t) \in \cK(G)$, we define a
species $\epsilon_{V, L, t}^G$ of $\KK \cT(G)$
such that
$$\epsilon_{V, L, t}^G(x) =
  \epsilon_{V, L, t}^V({}_V \res {}_G (x)) \; .$$
Using Lemma \ref{4.2}, a straightforward
adaptation of the argument in \cite[2.18]{BT10}
gives the next result.

\begin{thm} \label{4.3}
Given $(V, L, t), (V', L', t') \in \cK(G)$, then
$\epsilon_{V, L, t}^G = \epsilon_{V', L', t'}^G$
if and only if $(V, L, t)$ \linebreak
$=_G (V', L', t')$. The set
$\{ \epsilon_{V, L, t}^G : (V, L, t) \in_G \cK(G) \}$
is the set of species of $\KK \cT(G)$ and it is
also a basis for the dual space of $\KK \cT(G)$.
The dual basis $\{ e_{V, L, t}^G : (V, L, t) \in_G
\cK(G) \}$ is the set of primitive idempotents of
$\KK \cT(G)$. As a direct sum of trivial algebras
over $\KK$, we have
$$\KK \cT(G) = \bigoplus_{(V, L, t) \in_G
  \cK(G)} \KK e_{V, L, t}^G \; .$$
\end{thm}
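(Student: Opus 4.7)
The plan is to mirror the strategy of Bouc--Th\'{e}venaz \cite[2.18]{BT10} for $\KK T(G)$, with Theorem~\ref{4.1} replaced at the $V$-level by Lemma~\ref{4.2}, and to finish by matching $\KK$-dimensions.

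First I would check that each $\epsilon_{V, L, t}^G$ is a well-defined algebra map on $\KK \cT(G)$ by factoring it as
$$\KK \cT(G) \xrightarrow{{}_V \res {}_G} \KK \cT(V)
  \xrightarrow{y \mapsto y_V} \KK T^{\rm ex}(V)
  \xrightarrow{\epsilon_V^{L, t}} \KK \; .$$
The first arrow is a ring map since $\cT$ is a Green functor. The middle arrow is a ring map because, in the Green functor product on $\cT(V)$, a product $[U, a]_V \cdot [U', b]_V$ expands into components indexed by subgroups of the form $U \cap {}^g U' \leq V$, and only the $U = U' = V$ summand contributes to the $V$-position, yielding $[V, ab]_V$; hence $(yz)_V = y_V \, z_V$. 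The third arrow is a species by Lemma~\ref{4.2}.

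The heart of the proof is the distinctness assertion. Sufficiency of $G$-conjugacy is clear from the $G$-equivariance of the three arrows. For necessity, I would evaluate $\epsilon_{V, L, t}^G$ on the basis element $[U, [M_U^{K, F}]]_G$. A Mackey expansion of ${}_V \res {}_G$ keeps, for the $V$-component, only summands with $V \subseteq {}^g U$, so $\epsilon_{V, L, t}^G$ annihilates $[U, \cdot]_G$ whenever no $G$-conjugate of $V$ lies in $U$. A triangular argument on $|V|$ then isolates $V$: if $\epsilon_{V, L, t}^G = \epsilon_{V', L', t'}^G$, evaluation at $[V, [M_V^{L, F_t}]]_G$, with $F_t$ the indecomposable projective $\FF V/L$-module corresponding to $t$, combined with the formula $(y)_V = \sum_{\bar g \in V \backslash N_G(V)} {}^g y$ on $y = [V, a]_V$ and with Lemma~\ref{4.2}, produces a nonzero value, forcing $V \leq_G V'$; symmetry and orders give $V =_G V'$. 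With $V = V'$, ranging $(L', F')$ over $\cQ(V)$ and invoking linear independence of the species $\epsilon_V^{L, t}$ on $\KK T^{\rm ex}(V)$ (Lemma~\ref{4.2}) recovers the $N_G(V)$-orbit of $(L, t)$.

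Finally I would count. The rank of $\cT(G)$ is $|\cR(G)/G|$ by the basis displayed in Section~3, and the standard bijection between isomorphism classes of indecomposable projective $\FF U/K$-modules and $(U/K)$-conjugacy classes of $p'$-elements of $U/K$, together with its natural equivariance under the stabilizer of $K$ in $N_G(U)$, yields $|\cR(G)/G| = |\cK(G)/G|$. Since the commutative $\KK$-algebra $\KK \cT(G)$ thus admits $\dim_\KK \KK \cT(G)$ pairwise distinct species, it is $\KK$-semisimple, the $\epsilon_{V, L, t}^G$ exhaust its species, and the dual basis $\{e_{V, L, t}^G\}$ is the complete set of primitive idempotents, giving the direct sum decomposition. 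The main obstacle will be the triangular step above: one must carefully execute the Mackey expansion of restriction on the $[U, \cdot]_G$ basis of $\cT$ and propagate the isolation of $V$ down to the $N_G(V)$-orbit of $(L, t)$, where the extra bookkeeping at subgroups $U < V$ (absent in the argument for $T(G)$ in \cite[2.18]{BT10}) demands genuine care.
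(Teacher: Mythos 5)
Your proposal is correct and follows the same route as the paper, whose proof of Theorem~\ref{4.3} consists precisely of the instruction to adapt Bouc--Th\'{e}venaz \cite[2.18]{BT10} using Lemma~\ref{4.2}. You have simply supplied the details of that adaptation (well-definedness of the species via the factorization through $y \mapsto y_V$, the Mackey-triangularity argument isolating $V$ and then $(L,t)$, and the dimension count $|\cR(G)/G| = |\cK(G)/G|$), and these are consistent with the paper's intent.
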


We have the following easy corollary on lifts of
the primitive idempotents $e_{P, s}^G$.

\begin{cor} \label{4.4}
Given $(P, s) \in \cI(G)$, then $e_{\langle P, s \rangle,
P, s}^G$ is the unique primitive idempotent $e$ of
$\KK \cT(G)$ such that $\lin_G(e) = e_{P, s}^G$.
\end{cor}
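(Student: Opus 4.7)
The plan is to exploit the fact that $\lin_G \colon \KK \cT(G) \to \KK T(G)$ is a surjective $\KK$-algebra map between commutative semisimple $\KK$-algebras: surjectivity is from Proposition \ref{3.5}, and semisimplicity of both sides is from Theorems \ref{4.1} and \ref{4.3}. For any such map, the kernel is spanned by a subset of the primitive idempotents of the domain, and the complementary primitive idempotents are mapped bijectively and without coalescence onto the primitive idempotents of the codomain. This general observation already yields the uniqueness clause in the statement; it remains to identify the particular primitive idempotent whose image is $e_{P, s}^G$.

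To show $\lin_G(e_{\langle P, s\rangle, P, s}^G) = e_{P, s}^G$, I would dualize: by the pairing of Theorems \ref{4.1} and \ref{4.3}, this is equivalent to the species identity $\epsilon_{P, s}^G \circ \lin_G = \epsilon_{\langle P, s\rangle, P, s}^G$ on $\KK \cT(G)$. Two compatibilities let me reduce to the case $G = V := \langle P, s\rangle$. First, $\lin$ is a morphism of Green functors, so $\lin_V \circ {}_V \res {}_G = {}_V \res {}_G \circ \lin_G$. Second, $\epsilon_{P, s}^V \circ {}_V \res {}_G = \epsilon_{P, s}^G$, because Brauer quotients at $P$ commute with restriction to intermediate subgroups containing $P$, and the Brauer character value at $s$ depends only on $s$ as an element of $N_V(P)/P$. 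Together with $\epsilon_{\langle P, s\rangle, P, s}^G(x) = \epsilon_V^{P, s}(({}_V \res {}_G x)_V)$, these reduce the problem to the identity
\[
\epsilon_{P, s}^V(\lin_V(y)) = \epsilon_V^{P, s}(y_V), \quad y \in \KK \cT(V).
\]

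It suffices to check this on basis elements $y = [U, [M]]_V$ with $U \leq V$ and $M$ an indecomposable exprojective $\FF U$-module. The case $U = V$ is exactly the compatibility recorded in the paragraph preceding Lemma \ref{4.2}: since $s$ normalizes $P$, the normal closure of $P$ in $V$ is $P$ itself and $s$ is its own image in $V/P$, giving $\epsilon_{P, s}^V [M] = \epsilon_V^{P, s} [M]$ for exprojective $M$. The case $U < V$ forces $y_V = 0$, so the task reduces to verifying $\epsilon_{P, s}^V({}_V \ind {}_U [M]) = 0$. This Brauer-quotient computation is the chief, though routine, technical step: using the Bouc--Th\'{e}venaz formula together with $P \unlhd V$ and $V/P = \langle s\rangle$ cyclic, either $P \not\leq U$ and $({}_V \Ind {}_U M)(P) = 0$, or $P \leq U$ and $({}_V \Ind {}_U M)(P) \cong {}_{V/P} \ind {}_{U/P}(M(P))$ as $V/P$-modules; in the latter subcase, $U/P$ is a proper subgroup of the cyclic group $\langle s\rangle$, so $s$ lies outside it and the Brauer character of the induced module vanishes at $s$.
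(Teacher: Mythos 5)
Your argument is correct, and it is precisely the species-dualization argument implicit in the paper's framework: the paper offers no explicit proof, calling this an easy corollary of Theorems \ref{4.1} and \ref{4.3} together with the compatibility $\epsilon_{P,s}^G[M]=\epsilon_G^{L,t}[M]$ recorded just before Lemma \ref{4.2}. Your reduction to $V=\langle P,s\rangle$, the check on basis elements $[U,[M]]_V$, and the vanishing of the Brauer character of ${}_{V/P}\Ind{}_{U/P}(M(P))$ at the generator $s$ when $U<V$ spell out exactly the computation the authors treat as immediate.
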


\end{document}